\documentclass{amsart}

\usepackage{amscd}

\newcommand{\darkrad}{0.115}

\setlength{\unitlength}{.4in}

%
%
\newcommand{\Q}{{\mathbb{Q}}}        
\newcommand{\R}{{\mathbb{R}}}        
\newcommand{\F}{{\mathbb{F}}}        
\newcommand{\Z}{{\mathbb{Z}}}        

\newcommand{\Zm}[1]{\Z/{#1}\Z}

\newcommand{\e}{\varepsilon}

\newcommand{\la}{\lambda}
\newcommand{\D}{\Delta}



\newcommand{\oddots}{{\mathinner{\mkern1mu\raise1pt\vbox{\kern7pt\hbox{.}}\mkern2mu\raise4pt\hbox{.}\mkern2mu\raise7pt\hbox{.}\mkern1mu}}}

%
%
%
%
\newcommand{\s}{\sigma}

%
%

%
%

%
%
\newcommand{\ksep}{k_{{\mathrm{sep}}}}

%
%


\newcommand{\pform}[1]{{\langle\!\langle{#1}\rangle\!\rangle}} 


%
%


\DeclareMathOperator{\Spin}{Spin}           

\DeclareMathOperator{\SL}{SL}

\DeclareMathOperator{\PGL}{PGL}
\newcommand{\SO}{\mathrm{SO}}

\newcommand{\Gm}{\mathbb{G}_m}

\newcommand{\dE}{\ensuremath{^2\!E_6}}

\newcommand{\iiiD}{^3\!D_4}
\newcommand{\viD}{^6\!D_4}

\newcommand{\iiD}{^2\!D_4}

\DeclareMathOperator{\Spec}{Spec}

%
%

\DeclareMathOperator{\Gal}{Gal}  

\DeclareMathOperator{\im}{im}

\DeclareMathOperator{\chr}{char}

\DeclareMathOperator{\res}{res}

%
%

%
%

\DeclareMathOperator{\aut}{Aut}
\DeclareMathOperator{\Aut}{Aut}

\newcommand{\Hom}{{\mathrm{Hom}}}

%
%

%
%

\newcommand{\iso}{\xrightarrow{\sim}}

\newcommand{\ra}{\rightarrow}

%
%


%
%

%
%
\usepackage{amsthm}



\newtheorem{thm}[equation]{Theorem}
\newtheorem{lem}[equation]{Lemma}

\newtheorem{prop}[equation]{Proposition}

\newtheorem*{thm*}{Theorem}
\newtheorem*{prop*}{Proposition}
\newtheorem*{cor*}{Corollary}
\newtheorem*{lem*}{Lemma}
\newtheorem*{MT*}{Main Theorem}


\theoremstyle{definition} %
\newtheorem{defn}[equation]{Definition}
\newtheorem*{defn*}{Definition}

\newtheorem{eg}[equation]{Example}

\theoremstyle{remark} %

\newtheorem*{rmk*}{Remark}
\newtheorem*{rmks*}{Remarks}

%
%
\newtheoremstyle{exercise}
  {3pt}
  {3pt}
  {\small}
  {}
  {\sc\small}
  {.}
  {.5em}
   {}     
  {}

\theoremstyle{exercise}

%
  
%
%

%
%
%
\makeatletter
{\renewcommand{\theequation}{#1}}%
{\renewcommand{\theequation}{\arabic{equation}}\addtocounter{equation}{-1}\global\@ignoretrue}
\makeatother

\makeatletter
{\renewcommand{\theequation}{#1}\begin{eqnarray}}%
{\end{eqnarray}\renewcommand{\theequation}{\arabic{equation}}\addtocounter{equation}{-1}\global\@ignoretrue}
\makeatother

%
%
\makeatletter
{\smallskip \refstepcounter{equation}\noindent{\textbf{\theequation.} }{{\textbf{#1.}}}}%
{\smallskip \global\@ignoretrue}
\makeatother

%
%
\makeatletter
{\smallskip \refstepcounter{equation}\noindent{\textbf{\theequation.} }{{\textbf{#1}}}}%
{\smallskip \global\@ignoretrue}
\makeatother

%
%
\makeatletter
{\smallskip \refstepcounter{equation}{\sc \theequation}{\sc (#1).}}%
{\smallskip \global\@ignoretrue}
\makeatother

%
%
\makeatletter
{\smallskip\refstepcounter{equation}\noindent{\textbf{\theequation.}}{\textsl{ #1.}}}%
{\smallskip\global\@ignoretrue}
\makeatother

%
%
\makeatletter
\newenvironment{borel*}%
{\smallskip \refstepcounter{equation}\noindent{\textbf{\theequation.}}}%
{\global\@ignoretrue}
\makeatother

%
%
\newcommand{\flist}[1]{\hangindent\leftmargini\textup{(1)}\hskip\labelsep {#1}%
\begin{enumerate}%
\setcounter{enumi}{1}%
}

\usepackage[all]{xy}

\theoremstyle{plain}

\renewcommand{\theequation}{\arabic{equation}}

\newcommand{\even}{{\mathrm{even}}}

\newcommand{\sep}{{\mathrm{sep}}}

\renewcommand{\O}{\mathrm{O}}

\newcommand{\Gb}{\overline{G}}

\newcommand{\Gt}{\widetilde{G}}
\newcommand{\Tt}{\widetilde{T}}

\DeclareMathOperator{\PSO}{PSO}

\begin{document}

\title[Outer automorphisms of algebraic groups]{Outer automorphisms of algebraic groups
and determining groups by their maximal tori}

\begin{abstract}
We give a cohomological criterion for existence of outer automorphisms of a semisimple algebraic group over an arbitrary field.  This criterion is then applied to the special case of groups of type $D_{2n}$ over a global field, which completes some of the main results from 
the paper ``Weakly commensurable arithmetic groups and isospectral locally symmetric spaces'' (Pub. Math. IHES, 2009) by Prasad and Rapinchuk and gives a new proof of a result from another paper by the same authors.
\end{abstract}

\subjclass[2010]{Primary 20G30; Secondary 11E72, 20G15}


\author{Skip Garibaldi}
\address{Department of Mathematics and Computer Science, Emory University, Atlanta, GA 30322, USA}
\email{skip@mathcs.emory.edu}

\maketitle

One goal of this paper is the (rather technical) Theorem \ref{MT} below, which completes some of the main results in the remarkable paper 
\cite{PrRap:weakly} by Gopal Prasad and Andrei Rapinchuk.  For example, combining their Theorem 7.5 with our Theorem \ref{MT} gives:
\begin{thm} \label{tori}
Let $G_1$ and $G_2$ be connected absolutely simple algebraic groups over a number field $K$ that have the same $K$-isomorphism classes of maximal $K$-tori.  Then:
\begin{enumerate}
\item $G_1$ and $G_2$ have the same Killing-Cartan type (and even the same quasi-split inner form) or one has type $B_n$ and the other has type $C_n$.
\item \label{tori2} If $G_1$ and $G_2$ are isomorphic over an algebraic closure of $K$ and they are not of type $A_n$ for $n \ge 2$, $D_{2n+1}$, or $E_6$, then $G_1$ and $G_2$ are $K$-isomorphic.
\end{enumerate}
\end{thm}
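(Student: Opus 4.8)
The plan is to derive the theorem by combining the reduction in Theorem~7.5 of \cite{PrRap:weakly} with the outer-automorphism criterion of Theorem~\ref{MT}. For part (1) I would argue that the $K$-isomorphism classes of maximal $K$-tori already determine the relevant combinatorial data. A maximal $K$-torus $T\subseteq G_i$ records the lattice $X^*(T)$ with its Galois action together with the action of the Weyl group $W=N_{G_i}(T)/T$; letting $T$ range over all classes recovers the root datum of $G_i$ up to Langlands duality together with the $*$-action of the Galois group on the Dynkin diagram. Since the Weyl group and the lattice fix the Killing--Cartan type but cannot separate a root system from its dual, this gives exactly the stated conclusion: the type is pinned down up to the $B_n$--$C_n$ interchange, and the $*$-action pins down the quasi-split inner form. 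I expect this step to be essentially formal once the Prasad--Rapinchuk framework is in place.

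For part (2) I would fix a common quasi-split inner form $G^*$ and write $G_1={}^{\xi_1}G^*$ and $G_2={}^{\xi_2}G^*$ with $\xi_i\in Z^1(K,\Gb)$, where $\Gb=(G^*)_{\mathrm{ad}}$. The role of Theorem~7.5 is to show that the coincidence of maximal tori forces $[\xi_1]$ and $[\xi_2]$ into a single orbit of $\operatorname{Out}(G^*)$ acting on $H^1(K,\Gb)$; concretely, $G_2$ is $K$-isomorphic either to $G_1$ or to the twist ${}^\theta G_1$ by a diagram automorphism $\theta$. For the types with no diagram automorphism --- $B_n$, $C_n$, $E_7$, $E_8$, $F_4$, $G_2$, and $A_1$ --- the second possibility is empty and $G_1\cong G_2$ follows at once. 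What remains is to handle the types carrying a nontrivial $\theta$, where $G_1\cong G_2$ holds if and only if ${}^\theta G_1\cong G_1$, i.e.\ if and only if $G_1$ admits an outer automorphism defined over $K$.

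This last condition is what separates the two lists. For $A_n$ ($n\ge 2$), $D_{2n+1}$, and $E_6$ the automorphism $\theta$ inverts the cyclic center of $G^*$, and the resulting cohomological condition can genuinely fail, which is the source of the counterexamples that force these types to be excluded. For $D_{2n}$, by contrast, the center is $\mu_2\times\mu_2$ and $\theta$ interchanges its two factors --- equivalently it swaps the two components of the even Clifford algebra attached to $G_1$ --- and Theorem~\ref{MT} turns this into a verifiable cohomological criterion and checks that it holds in the situation at hand, yielding ${}^\theta G_1\cong G_1$ and hence $G_1\cong G_2$. The main obstacle is exactly this final input: everything upstream is a reduction, whereas establishing the criterion for $D_{2n}$ --- including the triality type $D_4$, where $\operatorname{Out}(G^*)=S_3$ and the argument needs extra care --- is the substantive new ingredient supplied by Theorem~\ref{MT}.
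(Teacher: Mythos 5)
Your top-level skeleton --- quote Theorem 7.5 of \cite{PrRap:weakly} for part (1) and for every type in part (2) except $D_{2n}$, then invoke Theorem \ref{MT} to handle $D_{2n}$ --- is exactly the paper's (one-sentence) proof, and your identification of Theorem \ref{MT} as the one substantive new ingredient is correct. But your description of how the two ingredients interface in part (2) contains a real error. First, if Theorem 7.5 really placed $[\xi_1]$ and $[\xi_2]$ in a single orbit of the (twisted) action of $\aut(\D)(K)$ on $H^1(K,\Gb)$, you would already be done: the fibers of $H^1(K,\Gb)\to H^1(K,\aut(G^*))$ are precisely these orbits, so ``same orbit'' is literally the statement $G_1\cong_K G_2$, and no further criterion would be needed. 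What the coincidence of maximal tori actually buys (via Remark 4.4 and p.~156 of \cite{PrRap:weakly}) is the hypothesis list of Theorem \ref{MT}: a $K_\sep$-isomorphism $G_1\to G_2$ restricting to a $K$-isomorphism of maximal tori, quasi-splitness outside a finite set $V$ of places, and the condition that the shared torus contains a maximal split torus at each $v\in V$.

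Second, the reduction ``$G_1\cong G_2$ iff $G_1$ admits an outer $K$-automorphism'' runs the cohomological criterion in the wrong direction. Condition \eqref{c3} of Theorem \ref{flayed.prop} says $\ker\delta\cap\ker\left[H^1(k,\Gb)\to H^1(k,\aut(\Gb))\right]=0$: given two twists with the same Tits class, it lets you conclude the twisting classes are \emph{equal} once you already know the twisted groups are isomorphic --- it does not produce the isomorphism. In the paper the outer-automorphism criterion is used only locally, inside Lemma \ref{local}: over $\R$ one first shows $(G_1)_{K_v}\cong (G_2)_{K_v}$ and $\delta(z_1)=\delta(z_2)$ by comparing Tits indices (and over non-archimedean completions by a Tits-algebra computation with the even Clifford algebra), and only then uses \eqref{c3} to upgrade this to equality of cocycles $z_1=z_2$ in $H^1(K_v,\Gb)$. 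The global isomorphism is then extracted from the Hasse principle $H^1(K,\Gb)\hookrightarrow\prod_v H^1(K_v,\Gb)$ applied to cocycles valued in the common maximal torus, not from the existence of a global outer automorphism of $G_1$. Your proposal omits this local-global mechanism entirely and replaces it with an implication that does not follow from the stated criterion.
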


This result is essentially proved by Prasad-Rapinchuk in \cite{PrRap:weakly}, except that paper omits 
types $D_{2n}$ for $2n \ge 4$ in \eqref{tori2}.  Our Theorem \ref{MT} gives a new proof of the $2n \ge 6$ case (treated by Prasad-Rapinchuk in a later paper \cite[\S9]{PrRap:Deven}) and settles the last remaining case of groups of type $D_4$.  Note that in Theorem \ref{tori}\eqref{tori2},
types $A_n$, $D_{2n+1}$, and $E_6$ are genuine exceptions by \cite[7.6]{PrRap:weakly}.  

Similarly, combining our Theorem \ref{MT} with the arguments in \cite{PrRap:weakly} implies that their Theorems 4, 8.16, and 10.4 remain true if you delete ``$D_4$" from their statements---that is, the conclusions of those theorems regarding weak commensurability, locally symmetric spaces, etc., also hold for groups of type $D_4$.

We mention the following specific result as an additional illustration.  For a Riemannian manifold $M$, write $\Q L(M)$ for the set of rational multiples of lengths of closed geodesics of $M$.
\begin{thm} \label{hyp.thm}
Let $M_1$ and $M_2$ be arithmetic quotients of real hyperbolic space $\mathbf{H}^n$ for some $n \not\equiv 1 \bmod 4$.  If $\Q L(M_1) = \Q L(M_2)$, then $M_1$ and $M_2$ are commensurable (i.e., $M_1$ and $M_2$ have a common finite-sheeted cover).
\end{thm}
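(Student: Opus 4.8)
The plan is to translate the geometric hypothesis into the language of weakly commensurable arithmetic groups and then invoke the now-complete Prasad--Rapinchuk machinery. First I would identify the ambient algebraic group: the connected isometry group of $\mathbf{H}^n$ is the group of real points of a form of $\PSO_{n+1}$ attached to a quadratic form $q$ of signature $(n,1)$, so the arithmetic quotients $M_i = \Gamma_i \backslash \mathbf{H}^n$ are governed by arithmetic lattices $\Gamma_i$ in absolutely simple groups $G_i$ defined over number fields. When $n$ is even these groups have type $B_{n/2}$, and when $n$ is odd they have type $D_{(n+1)/2}$. The hypothesis $n \not\equiv 1 \bmod 4$ is exactly what forces $(n+1)/2$ to be even in the odd case, so that the type is $D_{2m}$ rather than the genuine exception $D_{2m+1}$ appearing in Theorem~\ref{tori}\eqref{tori2}.

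Next I would pass from lengths of closed geodesics to eigenvalues of semisimple elements. A closed geodesic of $M_i$ is the image of the axis of a loxodromic element $\gamma \in \Gamma_i$, and its length is a fixed rational multiple of $\log \lambda(\gamma)$, where $\lambda(\gamma)$ is the dominant eigenvalue of $\gamma$ acting on the quadratic space. Thus the equality $\Q L(M_1) = \Q L(M_2)$ forces, for each loxodromic $\gamma_1 \in \Gamma_1$, a loxodromic $\gamma_2 \in \Gamma_2$ with $\log \lambda(\gamma_1)$ and $\log \lambda(\gamma_2)$ rationally dependent, i.e.\ a multiplicative relation $\lambda(\gamma_1)^a = \lambda(\gamma_2)^b$ among their eigenvalues. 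This is precisely the defining condition for weak commensurability, so the two lattices are weakly commensurable in the sense of \cite{PrRap:weakly}.

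The final step is to upgrade weak commensurability to commensurability. By the arguments of \cite{PrRap:weakly}, weakly commensurable arithmetic groups have the same Killing--Cartan type and, outside the exceptional types, weak commensurability implies genuine commensurability; the only gap in their treatment of the orthogonal case was type $D_4$ (together with the reliance on \cite{PrRap:Deven} for $D_{2m}$ with $2m \ge 6$). As noted above, our Theorem~\ref{MT} removes exactly this gap, so their Theorems~8.16 and~10.4 hold verbatim after deleting ``$D_4$''. Since the admissible types here are $B_{n/2}$ and $D_{2m}$, both of which lie outside the exceptional list of Theorem~\ref{tori}\eqref{tori2}, I conclude that $\Gamma_1$ and $\Gamma_2$ are commensurable, hence $M_1$ and $M_2$ admit a common finite-sheeted cover.

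The main obstacle is concentrated entirely in the last step, and specifically in the type $D_{2m}$ case: for even orthogonal groups the maximal-torus data does not by itself detect the outer automorphism that separates a group from its nontrivial twists, so one cannot naively read commensurability off the length spectrum. Resolving this is the role of the cohomological criterion of Theorem~\ref{MT}, whose hardest point is the triality-laden case $D_4$, where the group of outer automorphisms is $S_3$ rather than $\mathbb{Z}/2$. Along the way I would be careful to verify that the relevant $G_i$ are genuinely absolutely simple—so that small-rank coincidences and the disconnectedness of the full isometry group of $\mathbf{H}^n$ cause no trouble—and that passing between the $\PSO$, $\mathrm{SO}$, and $\mathrm{Spin}$ forms does not alter the commensurability class of the lattices.
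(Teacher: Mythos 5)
Your overall strategy is the same one the paper relies on: reduce $\Q L(M_1)=\Q L(M_2)$ to weak commensurability of the underlying arithmetic subgroups (unconditional in the rank-one hyperbolic case, since lengths are logarithms of single eigenvalues), then feed this into the Prasad--Rapinchuk machinery, with Theorem~\ref{MT} supplying the missing type-$D_{2m}$ (in fact only the $D_4$, i.e.\ $n=7$) case. Be aware, though, that the paper does not actually run this argument uniformly: its ``proof'' is a citation list --- $n=2$ to Reid, $n=3$ to Chinburg--Hamilton--Long--Reid, $n$ even $\ge 4$ and $n\equiv 3\bmod 4$ with $n\ge 11$ to \cite[Cor.~8.17]{PrRap:weakly} --- and only $n=7$ is new. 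Your uniform treatment has a genuine soft spot exactly where the paper defers to Reid and CHLR: for $n=3$ the real Lie algebra is $\mathfrak{so}(3,1)\cong\mathfrak{sl}_2(\mathbf{C})$ and the arithmetic lattices arise from quaternion algebras over number fields with one complex place, so the relevant $K$-groups are absolutely simple of type $A_1$, not of type $D_2$ (which is not a simple type at all); similarly $n=2$ gives type $A_1=B_1$. Your parenthetical promise to ``verify that the relevant $G_i$ are genuinely absolutely simple'' flags this but does not discharge it --- you would either have to treat $n=2,3$ by the separate arguments the paper cites or check that the $A_1$ case of the Prasad--Rapinchuk results applies. One smaller inaccuracy: you locate the hardest point of Theorem~\ref{MT} in the trialitarian $D_4$ forms with outer automorphism group $S_3$, but in the paper's Lemma~\ref{local} the types ${}^3\!D_4$ and ${}^6\!D_4$ are dispatched easily (a non-quasi-split group over a local field cannot be trialitarian, and $\aut(\D)(k)=1$ in the ${}^6\!D_4$ case); the real work is in the ${}^1\!D$ and ${}^2\!D$ local analysis of Tits algebras.
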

The converse holds with no restriction on $n$, see \cite[Cor.~8.7]{PrRap:weakly}.  The theorem itself holds for $n = 2$ by \cite{Reid:isospectral}; for $n = 3$ by \cite{CHLR}; and for $n = 4, 6, 8, \ldots$ and $n = 11, 15, 19, \ldots$ by \cite[Cor.~8.17]{PrRap:weakly} (which relies on \cite{PrRap:Deven}).  The last remaining case, $n = 7$, follows from Theorem \ref{MT} below and arguments as in \cite{PrRap:weakly}.  The conclusion of Theorem \ref{hyp.thm} is false for $n = 5, 9, 13, \ldots$ by Construction 9.15 in \cite{PrRap:weakly}.

\smallskip
The other goal of this paper is Theorem \ref{flayed.prop}, which addresses the more general setting of a semisimple algebraic group $G$ over an arbitrary field $k$.  That theorem gives a cohomological criterion for the existence of outer automorphisms of $G$, i.e., for the existence of $k$-points on non-identity components of $\aut(G)$.  This criterion and the examples we give of when it holds make up the bulk of the proof of Theorem \ref{MT}, which concerns groups over global fields.

\subsection*{Notation} A \emph{global field} is a finite extension of $\Q$ or $\F_p(t)$ for some prime $p$.  A (non-archimedean) \emph{local field} is the completion of a global field with respect to a discrete valuation, i.e., a finite extension of $\Q_p$ or $\F_p((t))$ for some prime $p$.

We write $H^d(k, G)$ for the $d$-th flat (fppf) cohomology set $H^d(\Spec k, G)$ when $G$ is an algebraic affine group scheme over a field $k$.  In case $G$ is smooth, it is the same as the Galois cohomology set $H^d(\Gal(k), G(\ksep))$ where $\ksep$ denotes a separable closure of $k$ and $\Gal(k)$ denotes the group of $k$-automorphisms of $\ksep$.

We refer to \cite{PlatRap}, \cite{Sp:LAG}, and \cite{KMRT} for general background on semisimple algebraic groups.  Such a group $G$ is an \emph{inner form} of $G'$ if there is a class $\gamma \in H^1(k, \Gb)$, for $\Gb$ the adjoint group of $G$, such that $G'$ is isomorphic to $G$ twisted by $\gamma$.  We write $G_\gamma$ for the group $G$ twisted by the cocycle $\gamma$, following the \TeX-friendly notation of \cite[p.~387]{KMRT} instead of Serre's more logical $_\gamma G$. We say simply that $G$ is \emph{inner} or \emph{of inner type} if it is an inner form of a split group; if $G$ is not inner then it is \emph{outer}.

For a group scheme $D$ of multiplicative type, we put $D^*$ for its dual $\Hom(D, \Gm)$.

\section{Background: the Tits algebras determine the Tits class} \label{folk.sec}

Fix a semisimple algebraic group $G$ over a field $k$.  Its simply connected cover $\Gt$ and adjoint group $\Gb$ fit into an exact sequence
\begin{equation} \label{std.seq}
\begin{CD}
1 @>>> Z @>>> \Gt @>>> \Gb @>>> 1
\end{CD}
\end{equation}
where $Z$ denotes the (scheme-theoretic) center of $\Gt$. Write $\delta \!: H^1(k, \Gb) \ra H^2(k, Z)$ for the corresponding coboundary map.

There is a unique element $\nu_G \in H^1(k, \Gb)$ such that the twisted group $\Gb_{\nu_G}$ is quasi-split \cite[31.6]{KMRT}, and 
the \emph{Tits class} $t_G$ of $G$ is defined to be $t_G := -\delta(\nu_G) \in H^2(k, Z)$.  The element $t_G$ depends only on the isogeny class of $G$.

For $\gamma \in H^1(k, \Gb)$, the center of the twisted group $\Gt_\gamma$ is naturally identified with (and not merely isomorphic to) $Z$, and a standard twisting argument shows that
\begin{equation}
t_{G_\gamma} = t_G + \delta(\gamma).
\end{equation}

\begin{eg}
If $G$ itself is quasi-split, then $t_G = 0$ and for every $\gamma \in H^1(k, \Gb)$ we have $t_{G_\gamma} = \delta(\gamma)$.
\end{eg}

\begin{defn}[Tits algebras]
A \emph{Tits algebra} of $G$ is an element
\[
\chi(t_G) \in H^2(k(\chi), \Gm) \quad \text{for $\chi \in Z^*$,}
\]
where $k(\chi)$ denotes the subfield of $\ksep$ of elements fixed by the stabilizer of $\chi$ in $\Gal(k)$, i.e., $k(\chi)$ is the smallest separable extension of $k$ so that $\chi$ is fixed by $\Gal(k(\chi))$.

We can modify this definition to replace elements of $Z^*$ with weights.  Fix a pinning for $\Gt$ over $\ksep$ involving a maximal $k$-torus $\Tt$.  As $Z$ is contained in $\Tt$, every weight $\la$---i.e., every $\la \in \Tt^*$---induces by restriction an element of $Z^*$ and we define $\la(t_G)$ to be $\la\vert_Z(t_G)$.  
\end{defn}

Regarding history, the Tits algebras of $G$ were defined in \cite{Ti:R}.  The class $\la(t_G)$ measures the failure of the irreducible representation of $\Gt$ with highest weight $\la$---which is defined over $\ksep$---to be defined over $k$.  Roughly speaking, a typical example of a Tits algebra is provided by the even Clifford algebra of the special orthogonal group of a quadratic form, see for example \cite[\S27]{KMRT}.

Obviously, the Tits class $t_G$ determines the Tits algebras $\chi(t_G)$.  The converse also holds: 

\begin{prop} \label{tits.prop}
If $G$ is absolutely almost simple, then the natural map 
\[
\prod \la \!: H^2(k, Z) \ra \prod H^2(k(\la\vert_Z), \Gm)
\]
is injective, where the products range over minuscule weights $\la$.
\end{prop}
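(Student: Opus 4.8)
The plan is to peel the statement down to a single Brauer-group computation and then settle that by dévissage. The first input is what makes minuscule weights relevant: for $G$ absolutely almost simple the restriction $\Tt^*\ra Z^*$ identifies $Z^*$ with $P/Q$ (weights modulo roots), and a type-by-type check shows that the restrictions $\la\vert_Z$ of the minuscule weights $\la$ run over \emph{all} the nonzero elements of $Z^*$. So it suffices to prove that
\[
H^2(k,Z) \ra \prod_{0\ne\chi\in Z^*} H^2(k(\chi),\Gm),\qquad z\mapsto\bigl(\chi_*\operatorname{res}_{k(\chi)/k}(z)\bigr)_\chi ,
\]
is injective. As $Z$ is finite of multiplicative type it is the product of its $\ell$-primary components $Z_{(\ell)}$, with $H^2(k,Z)=\bigoplus_\ell H^2(k,Z_{(\ell)})$, and every character respects this splitting, so I may fix a prime $\ell$ and work with $Z_{(\ell)}$. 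For $G$ absolutely almost simple the module $Z_{(\ell)}^*$ is cyclic, say $\Z/\ell^a$, in every case except type $D_n$ with $n$ even and $\ell=2$, where it is $(\Z/2)^2$.

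Next I would cut the Galois action down to an $\ell$-group. Let $E/k$ be the minimal splitting field of $Z_{(\ell)}$, so $\Gamma:=\Gal(E/k)$ embeds in $\operatorname{Aut}(Z_{(\ell)}^*)$, and pass to $k_1=E^{\Gamma_\ell}$ for a Sylow $\ell$-subgroup $\Gamma_\ell$. Since $[k_1:k]$ is prime to $\ell$, the map $\operatorname{res}_{k_1/k}$ is injective on the $\ell$-primary group $H^2(k,Z_{(\ell)})$ (using $\operatorname{cores}\circ\operatorname{res}=[k_1:k]$), and $\chi(z)=0$ over $k(\chi)$ implies $\chi(\operatorname{res}_{k_1/k}z)=0$ over $k_1(\chi)\supseteq k(\chi)$; so it is enough to prove injectivity over $k_1$, where now $\Gamma$ is an $\ell$-group, hence cyclic (a Sylow subgroup of $(\Z/\ell^a)^\times$) and $E/k$ is cyclic. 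I then induct on $a$ via the canonical filtration $1\to Z_1\xrightarrow{\iota}Z_{(\ell)}\xrightarrow{\pi}Z_2\to1$, with $Z_1$ dual to $Z_{(\ell)}^*/\ell Z_{(\ell)}^*$ and $Z_2$ dual to $\ell Z_{(\ell)}^*$; because $\Gamma$ is an $\ell$-group it fixes the line $Z_{(\ell)}^*/\ell Z_{(\ell)}^*$, so $Z_1\cong\mu_\ell$ over $k$. The characters with $\chi\vert_{Z_1}=0$ are exactly those in $\ell Z_{(\ell)}^*$, i.e. those pulled back from $Z_2$; by the inductive hypothesis applied to $Z_2$ their vanishing on $z$ forces $\pi_*z=0$, so $z=\iota_*w$ for some $w\in H^2(k,\mu_\ell)$. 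The remaining top-order characters all have $k(\chi)=E$ and restrict to a fixed isomorphism $\phi\colon (Z_1)_E\cong\mu_\ell$, and therefore record only the single condition $\operatorname{res}_{E/k}\bigl(\phi(w)\bigr)=0$.

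The heart of the matter is that this one surviving condition already forces $z=0$. Identifying $\phi\colon H^2(k,\mu_\ell)\xrightarrow{\ \sim\ }H^2(k,\Gm)[\ell]$, I must show
\[
\ker\bigl(\operatorname{res}_{E/k}\colon H^2(k,\Gm)[\ell]\ra H^2(E,\Gm)\bigr)\ \subseteq\ \ker\bigl(\iota_*\colon H^2(k,\mu_\ell)\ra H^2(k,Z_{(\ell)})\bigr),
\]
and by the long exact sequence the right-hand side is $\operatorname{im}\bigl(\delta\colon H^1(k,Z_2)\ra H^2(k,\mu_\ell)\bigr)$. I would compute $\delta$ as cup product with the extension class of $1\to Z_1\to Z_{(\ell)}\to Z_2\to1$, which lies in $\operatorname{Ext}^1_k(Z_2,Z_1)$, a group canonically isomorphic to $H^1(k,\Z/\ell)=\operatorname{Hom}(\Gal(k),\Z/\ell)$, and then identify this class with the character cutting out the cyclic splitting field $E$ (both encode how $\Gamma$ twists $Z_{(\ell)}^*$). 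Granting this, $\operatorname{im}\delta$ is precisely the relative Brauer group $\ker(\operatorname{res}_{E/k})$, the displayed inclusion is an equality, and the proof closes.

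The non-cyclic case $Z_{(\ell)}^*=(\Z/2)^2$ fits the same template: after the reduction $\Gamma$ is a $2$-group inside $S_3=\operatorname{Aut}((\Z/2)^2)$, hence trivial or of order two. When $\Gamma$ is trivial the two coordinate characters detect the two $\mu_2$ factors directly; when $\Gamma$ has order two it fixes one nonzero character and swaps the other two (recall the three nonzero characters sum to $0$), and the fixed one plays the role of the quotient $Z_2$ while the swapped pair plays the role of the top-order characters, so the identical crux computation applies with $E$ the quadratic splitting field. Thus the genuine obstacle is exactly the identification of the extension class of the filtration with the splitting field $E$, equivalently $\operatorname{im}\delta=\ker(\operatorname{res}_{E/k})$; everything else is dévissage and restriction–corestriction. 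As a cross-check one can package the argument as a single quasi-trivial resolution $1\to Z\to T\to S\to1$ with $T=\prod R_{k(\chi)/k}\Gm$, which reduces injectivity to the vanishing of $H^1(k,S)$; the same crux then reappears as the assertion that this particular $S$ is invertible.
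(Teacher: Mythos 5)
Your dévissage is sound up to the point you yourself flag as the ``genuine obstacle,'' but that obstacle is not resolved by the sketch you give, and the sketch contains a concrete error. The group $\operatorname{Ext}^1_k(Z_2,Z_1)$ is \emph{not} canonically $H^1(k,\Z/\ell)$: it sits in an exact sequence $0\to H^1(k,\operatorname{Hom}(Z_2,Z_1))\to \operatorname{Ext}^1_k(Z_2,Z_1)\to \operatorname{Ext}^1_{\Z}(\Z/\ell,\Z/\ell^{a-1})^{\Gal(k)}$, and your extension $1\to Z_1\to Z_{(\ell)}\to Z_2\to 1$ has \emph{nontrivial} image on the right whenever $a\ge 2$, because the underlying abelian-group extension $0\to\Z/\ell^{a-1}\to\Z/\ell^a\to\Z/\ell\to 0$ is nonsplit. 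So the class is not a character of $\Gal(k)$, $\delta$ is not cup product with the character cutting out $E$, and the computation of $\operatorname{im}\delta$ as $\ker(\operatorname{res}_{E/k})$ is unjustified. (For the actual centers the inclusion $\ker(\operatorname{res}_{E/k})\cap H^2(k,\mu_\ell)\subseteq\operatorname{im}\delta$ is \emph{true}---indeed it is equivalent to the Proposition restricted to classes in the image of $H^2(k,\mu_\ell)$---but that means your reduction has not made the problem easier; it has relocated the entire difficulty into an unproven computation of a connecting map for a twisted cyclic $Z_2$. A separate small slip: the Sylow $2$-subgroup of $(\Z/2^a)^\times$ is not cyclic for $a\ge3$; this happens to be harmless here only because the Galois action on a cyclic $Z^*$ factors through $\{\pm1\}$, a fact you never invoke.)

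The irony is that your closing ``cross-check'' is essentially the paper's actual proof, and with the right choice of resolution no crux survives. The paper embeds $Z$ into the maximal torus $\Tt$ of the simply connected quasi-split inner form: by Steinberg, $\Tt$ is a product of Weil restrictions $R_{k(\chi)/k}(\Gm)$ indexed by Galois orbits of simple roots, with $Z\hookrightarrow\Tt$ given by the fundamental weights, while $\Tt/Z$ has character lattice freely generated by the simple roots permuted by Galois, hence is quasi-trivial and $H^1(k,\Tt/Z)=0$. Shapiro's lemma then identifies $H^2(k,Z)\hookrightarrow H^2(k,\Tt)$ with exactly the map in the statement, with no primary decomposition, no induction, and no Brauer-group computation. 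I would encourage you to either carry out that resolution argument honestly (proving $S=T/Z$ is quasi-trivial for the specific $T$ above, rather than merely ``invertible'' for your $T=\prod R_{k(\chi)/k}\Gm$, whose cokernel is not obviously controllable), or else supply a genuine proof of the inclusion $\ker(\operatorname{res}_{E/k})\subseteq\operatorname{im}\delta$ for the twisted cyclic case, which will require handling the Bockstein component of the extension class explicitly.
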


This proposition can probably be viewed as folklore.   I learned it from Alexander Merkurjev and Anne Qu\'eguiner.  Below, we we will use the following restatement: Given classes $\gamma_1, \gamma_2 \in H^1(k, \Gb)$, \emph{if $\la(\delta(\gamma_1)) = \la(\delta(\gamma_2))$ for every minuscule weight $\la$ of $G$, then $\delta(\gamma_1) = \delta(\gamma_2)$.} 

\begin{proof}[Proof of Proposition \ref{tits.prop}]
As $\Gt$ is simply connected, the weight lattice $P$ in the pinning is identified with $\Tt$ and the root lattice $Q$ is the kernel of the restriction $\Tt^* \ra Z^*$.  Since $G$ is assumed absolutely almost simple, its root system is irreducible, and this surjection identifies the minuscule (dominant) weights with the nonzero elements of $Z^*$
\cite[\S{VI.2}, Exercise 5a]{Bou:g4}.  Therefore, the claim is equivalent to showing that the map
\begin{equation} \label{tits.1}
\prod_{\chi \in Z^*} \chi \!: H^2(k, Z) \ra \prod_{\chi \in Z^*} H^2(k(\chi), Z)
\end{equation}
is injective.   This claim depends only on $Z$, so we may replace $G$ with $G_{\nu_G}$ and so assume that $G$ is quasi-split.

We choose the maximal $k$-torus $\Tt$ and pinning in $\Gt$ so that the usual Galois action preserves the fundamental chamber, i.e., permutes the set of simple roots $\D$ and the fundamental dominant weights.  In the short exact sequence $1 \ra Z \ra \Tt \ra \Tt/Z \ra 1$, the set $\D$ is a basis for the lattice $(\Tt/Z)^*$, so $H^1(k, \Tt/Z)$ is zero and the map $H^2(k, Z) \ra H^2(k, \Tt)$ is injective.
 
We fix a set $S$ of representatives of the $\Gal(k)$-orbits in $\D$ and write $\alpha_s$ (resp., $\la_s$) for the simple root (resp., fundamental dominant weight) corresponding to $s \in S$.  Because $\Gt$ is simply connected,
\[
\Tt \cong \prod_{s \in S} R_{k(\la_s\vert_Z)/k}(\im h_{\alpha_s})
\]
where $h_{\alpha_s}$ denotes the homomorphism $\Gm \ra \Tt$ corresponding to the coroot $\alpha^{\vee}_s$ \cite[p.~44, Cor.]{St}---note that the Weil restriction term makes sense because the stabilizers of $\alpha_s$, $\la_s$, and $\la_s\vert_Z$ all agree because of our particular choice of pinning.  The inclusion of $Z$ in $\Tt$ amounts to the product $\prod_{s \in S} \la_s$, hence \eqref{tits.1} is injective and the claim is proved.
\end{proof}

\section{Outer automorphisms of semisimple groups} \label{outer.sec}

We maintain the notation of the previous section, so that $G$ is semisimple over a field $k$ and $\D$ is a set of simple roots, equivalently, the Dynkin diagram of $G$.  The Galois action on $\D$ induces an action on $\Aut(\D)$, and in this way we view $\Aut(\D)$ as a finite \'etale (but not necessarily constant) group scheme.
There is a map 
\[
\alpha \!: \aut(G)(k) \ra \aut(\D)(k)
\]
described for example in \cite[\S16.3]{Sp:LAG}, and one can ask if this map is surjective.   That is, does every connected component of $\aut(G) \times \ksep$ that is defined over $k$ necessarily have a $k$-point?

One obstruction to $\alpha$ being surjective can come from the fundamental group, so we assume that $G$ is simply connected.  (One could equivalently assume that $G$ is adjoint.)
Another obstruction comes from the Tits class, as we now explain. 
There is a commutative diagram
\[
\xymatrix{
\aut(G) \ar[d] \ar[r]^\alpha & \aut(\D) \ar[dl] \\
\aut(Z)
 }
\]
where the diagonal arrow comes from the natural action of $\aut(\D)$ on the coroot lattice.  Hence $\aut(\D)(k)$ acts on $H^2(k, Z)$ and we have:

\begin{thm} \label{flayed.prop}
Recall that $G$ is assumed semisimple and simply connected.  Then there is an inclusion 
\begin{equation} \label{autset}
\im \left[ \alpha \!: \aut(G)(k) \ra \aut(\D)(k) \right] \subseteq \{ \pi \in \aut(\D)(k) \mid \pi(t_G) = t_G \}.
\end{equation}
Furthermore, the following are equivalent:
\begin{enumerate}
\renewcommand{\theenumi}{\alph{enumi}}
\item \label{c1} Equality holds in \eqref{autset}.
\item \label{c2} The sequence $H^1(k, Z) \ra H^1(k, G) \ra H^1(k, \aut(G))$ is exact.
\item \label{c3} $\ker \delta \cap \ker \left[ H^1(k, \Gb) \ra H^1(k, \aut(\Gb)) \right] = 0$.
\end{enumerate}
\end{thm}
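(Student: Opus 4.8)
The plan is to deduce the whole statement from the short exact sequence of group schemes
\[
1 \to \Gb \to \aut(G) \to \aut(\D) \to 1,
\]
in which $\Gb = \aut(G)^0$ is the group of inner automorphisms and $\aut(\D) = \pi_0(\aut(G))$; note also that $\aut(G) = \aut(\Gb)$, since passing to the simply connected cover identifies automorphisms of $G$ with those of $\Gb$. Its cohomology sequence reads
\[
\aut(G)(k) \xrightarrow{\alpha} \aut(\D)(k) \xrightarrow{\partial} H^1(k,\Gb) \xrightarrow{j} H^1(k,\aut(G)),
\]
so that $\im\alpha = \ker\partial$ and $\im\partial = \ker j$. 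Everything then hinges on comparing the connecting map $\partial$ with the coboundary $\delta\colon H^1(k,\Gb)\to H^2(k,Z)$.

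First I would prove the key identity
\[
\delta(\partial(\pi)) = \pi(t_G) - t_G \qquad\text{for } \pi\in\aut(\D)(k),
\]
up to replacing $\pi$ by $\pi^{-1}$, which is harmless since $\pi$ is invertible. To obtain it, fix a pinning over $\ksep$ and let $s(\pi)\in\aut(G)(\ksep)$ be the associated pinned lift of $\pi$; then $\partial(\pi)$ is represented by $\sigma\mapsto s(\pi)^{-1}\,{}^{\sigma}s(\pi)$, a coboundary in $\aut(G)$ whose values lie in $\Gb$ because $\pi$ is Galois-fixed. Hence the inner twist $G_{\partial(\pi)}$ is $k$-isomorphic to $G$ through $s(\pi)$, and since inner automorphisms fix $Z$ pointwise this isomorphism acts on the canonically identified center as $\pi$. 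Functoriality of the Tits class then gives $t_{G_{\partial(\pi)}} = \pi^{-1}(t_G)$, whereas the twisting formula gives $t_{G_{\partial(\pi)}} = t_G + \delta(\partial(\pi))$; comparing the two yields the identity. Reconciling the two identifications of the center here---the canonical inner-twist one and the one coming from $s(\pi)$, and thereby pinning down the sign/inverse convention---is the step I expect to be the main obstacle.

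Granting the identity, the inclusion \eqref{autset} is immediate, since $\pi\in\im\alpha=\ker\partial$ forces $\pi(t_G)-t_G=\delta(0)=0$. For (\ref{c1})$\Leftrightarrow$(\ref{c3}) I would argue both directions using $\im\partial = \ker j$. If (\ref{c3}) holds and $\pi(t_G)=t_G$, then $\partial(\pi)\in\ker\delta\cap\ker j = 0$, so $\pi\in\ker\partial=\im\alpha$ and equality holds in \eqref{autset}. Conversely, if (\ref{c1}) holds and $\beta\in\ker\delta\cap\ker j=\ker\delta\cap\im\partial$, then writing $\beta=\partial(\pi)$ the vanishing $\delta(\beta)=0$ forces $\pi(t_G)=t_G$ by the identity, whence $\pi\in\im\alpha=\ker\partial$ and $\beta=0$.

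Finally, (\ref{c2})$\Leftrightarrow$(\ref{c3}) uses no input from the Tits class: it is a diagram chase in the cohomology of $1\to Z\to G\to\Gb\to 1$. Writing $\bar{p}\colon H^1(k,G)\to H^1(k,\Gb)$ for the induced map, the map $H^1(k,G)\to H^1(k,\aut(G))$ factors as $j\circ\bar{p}$, so its kernel is $\bar{p}^{-1}(\ker j)$, while the image of $H^1(k,Z)\to H^1(k,G)$ is $\bar{p}^{-1}(\ast)$ and $\im\bar{p}=\ker\delta$. Thus the exactness in (\ref{c2}) holds exactly when $\ker j\cap\ker\delta=0$, which is (\ref{c3}).
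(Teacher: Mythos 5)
Your proposal is correct and follows essentially the same route as the paper: the key identity $\delta(\partial(\pi)) = \pi^{-1}(t_G) - t_G$ is exactly the paper's crux $\pi(t_{G_{\beta(\pi)}}) = t_G$ rewritten via the twisting formula $t_{G_\gamma} = t_G + \delta(\gamma)$, proved the same way (a lift $f$ of $\pi$ over $\ksep$ gives a $k$-isomorphism $G_{\partial(\pi)} \cong G$ carrying $\nu_{G_{\partial(\pi)}}$ to $\nu_G$ and acting on $Z$ through $\pi$), and the remaining equivalences are the identical diagram chase. The identification-of-centers point you flag as the main obstacle is handled in the paper by the commutative square relating $\delta_\gamma$ and $\delta$ through $f$, and causes no real difficulty.
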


\begin{proof}
We consider the interlocking exact sequences
\[
\begin{CD}
@. @. H^1(k, Z) \\
@. @. @VVV \\
@. @. H^1(k, G)\\
@.@.@VV{q}V \\
\aut(G)(k) @>\alpha>> \aut(\D)(k) @>{\beta}>> H^1(k, \Gb) @>\e>> H^1(k,\aut(G)) \\
@. @. @VV{\delta}V \\
@. @. H^2(k,Z)
\end{CD}
\]
The crux is to prove that
\begin{equation} \label{main.lem}
\pi(t_{G_{\beta(\pi)}}) = t_G  \quad \text{for $\pi \in \aut(\D)(k)$}.
\end{equation}
Since $\Gb$ and $\aut(G)$ are smooth, we may view
their corresponding $H^1$'s as Galois cohomology.  Put $\gamma := \beta(\pi)$, so $\gamma_\s = f^{-1} \,
^\s\!f$ for some $f \in \aut(G)(\ksep)$ and every $\s \in \Gal(k)$.  The group
$G_\gamma$ has the same $\ksep$-points as $G$, but a different
Galois action $\circ$ given by $\s \circ g = \gamma_\s \s g$ for
$g \in G(\ksep)$, $\s \in \Gal(k)$, and where juxtaposition
denotes the usual Galois action on $G$.

The map $f$ gives a
$k$-isomorphism $G_\gamma \iso G$.
Sequence \eqref{std.seq} gives a commutative diagram
  \[
  \begin{CD}
  H^1(k, \Gb_\gamma) @>{\delta_\gamma}>> H^2(k, Z) \\
  @V{f}VV @V{f}VV \\
  H^1(k, \Gb) @>{\delta}>> H^2(k, Z).
  \end{CD}
  \]
Let $\eta \in Z^1(k, \Gb_\gamma)$ be a 1-cocycle
representing $\nu_{G_\gamma}$.  Then $f(\eta)$ is a 1-cocycle in
$Z^1(F,\Gb)$ and $f$ is a $k$-isomorphism $f \!:
(G_\gamma)_\eta \iso G_{f(\eta)}$.  Since $(G_\gamma)_\eta$ is
$k$-quasi-split, we have $f(\nu_{G_\gamma}) = f(\eta) = \nu_G$.
The
commutativity of the diagram gives $f(t_{G_\gamma}) = t_G$, proving \eqref{main.lem}.

It follows that $\pi \in \aut(\D)(k)$ satisfies $\pi(t_G) = t_G$ if and only if $t_{G_{\beta(\pi)}} = t_G$, if and only if $\delta(\beta(\pi)) = 0$.  That is, in \eqref{autset}, the left side is $\ker \beta$ and the right side is $\ker \delta \beta$, which makes the inclusion in \eqref{autset} and the equivalence of \eqref{c1} and \eqref{c3} obvious.  Statement \eqref{c2} says that $\ker \e q = \ker q$, i.e., $\ker \e \cap \im q = 0$, which is \eqref{c3}.
\end{proof}

It is easy to find non-simple groups, even over $\R$, for which the inclusion \eqref{autset} is proper, because the Tits index also provides an obstruction to equality.  Here is an example to show that these are not the only obstructions, even over a number field.

\begin{eg} \label{G2G2}
Fix a prime $p$ and write $x_1, x_2$ for the two square roots of $p$ in $k := \Q(\sqrt{p})$.  For $i = 1, 2$, let $H_i$ be the group of type $G_2$ associated with the 3-Pfister quadratic form  $\phi_i := \pform{-1, -1, x_i}$.
For $G = H_1 \times H_2$, the Tits index is 
\[
\begin{picture}(3,0.55)
    \put(0.2,.25){\line(1,0){1}}
    \put(0.2,.20){\line(1,0){1}}
    \put(0.2,.30){\line(1,0){1}}
    
    \multiput(0.2,.25)(1,0){2}{\circle*{\darkrad}}

    \put(0.6, .15){\makebox(0.2,0.3)[s]{$<$}}

    \put(1.7,.25){\line(1,0){1}}
    \put(1.7,.20){\line(1,0){1}}
    \put(1.7,.30){\line(1,0){1}}
    
    \multiput(1.7,.25)(1,0){2}{\circle*{\darkrad}}

    \put(2.1, .10){\makebox(0.2,0.3)[s]{$<$}}
\end{picture}
\]
and $\aut(\D)(k) = \Zm2$, 
but $H_1$ is not isomorphic to $H_2$, so no $k$-automorphism of $G$ interchanges the two components.  
\end{eg}

Nonetheless, we now list ``many" cases in which equality holds in \eqref{autset}.

\begin{eg} \label{qs.eg}
If $G$ is quasi-split, then $\alpha$ maps $\aut(G)(k)$ onto $\aut(\D)(k)$ by \cite[XXIV.3.10]{SGA3.3} or \cite[31.4]{KMRT}, so equality holds in \eqref{autset}.
\end{eg}

\begin{eg} \label{cd2.eg}
If $H^1(k, G) = 0$, then trivially Th.~\ref{flayed.prop}\eqref{c2} holds.  That is, \eqref{c1}--\eqref{c3} hold for every semisimple simply connected $G$ if $k$ is local (by Kneser-Bruhat-Tits), global with no real embeddings (Kneser-Harder-Chernousov), or the function field of a complex surface (de Jong-He-Starr-Gille), and conjecturally if the cohomological dimension of $k$ is at most 2 (Serre).
\end{eg}

\begin{eg} \label{flayed.eg}
Suppose $G$ is absolutely almost simple (and simply connected).  Conditions \eqref{c1}--\eqref{c3} of the proposition hold trivially if $\aut(\D)(k) = 1$, in particular if $G$ is not of type $A$, $D$, or $E_6$ or if $G$ has type $^6\!D_4$.  Conditions \eqref{c1}--\eqref{c3} also hold:
\begin{enumerate}
\renewcommand{\theenumi}{\roman{enumi}}
\item \label{flayed.inner} \emph{if $G$ is of inner type}.  If $G$ is of inner type $A$ ($n \ge 2$), then $\aut(\D) = \Zm2$ and the nontrivial element $\pi$ acts via $z \mapsto z^{-1}$ on $Z$, hence $\pi(t_G) = -t_G$.  If $2t_G = 0$, then $G$ is $\SL_1(D)$ for $D$ a central simple algebra of degree $n+1$ such that there is an anti-automorphism $\s$ of $D$, hence $g \mapsto \s(g)^{-1}$ is a $k$-automorphism of $G$ mapping to $\pi$.  (By a theorem of Albert \cite[Th.~8.8.4]{Sch} one can even arrange for $\s$ to have order 2, hence for this automorphism of $G$ to have order 2.)

Next let $G$ be of type $^1\!D_n$ for $n \ge 5$ and suppose that the nonidentity element $\pi \in \Aut(\D)(k)$ fixes the Tits class $t_G$.  The group $G$ is isomorphic to $\Spin(A, \s, f)$ for some central simple $k$-algebra $A$ of degree $2n$ and quadratic pair $(\s, f)$ on $A$ such that the even Clifford algebra $C_0(A, \s, f)$ is isomorphic to a direct product $C_+ \times C_-$ of central simple algebras.  Since $\pi$ fixes the Tits class, the algebras $C_+$ and $C_-$ are isomorphic.  The equation
$[A] + [C_+] - [C_-] = 0$
holds in the Brauer group of $k$ by \cite[9.12]{KMRT} (alternatively, as a consequence of the fact that the cocenter is an abelian group of order 4).  Therefore, $A$ is split.
  Let $\phi \in \O(A, \s, f)(k)$ be a hyperplane reflection as in \cite[12.13]{KMRT}; it does not lie in the identity component of $\O(A, \s, f)$.  The automorphism of $\SO(A, \s, f)$ given by $g \mapsto \phi g \phi^{-1}$ lifts to an automorphism of $\Spin(A, \s, f)$ that is outer, i.e., that induces the automorphism $\pi$ on $\D$.  (To recap: given a nonzero $\pi \in \aut(\D)(k)$ that preserves the Tits class, we deduced that $A$ is split and therefore $(A,\s,f)$ has an improper isometry.  Conversely, Lemma 1b from \cite[p.~42]{Kn:GC} shows: if $\chr k \ne 2$ and $(A, \s)$ has an improper isometry, then $A$ is split and obviously such a $\pi$ exists.)

For the remaining cases, we point out merely that an outer automorphism of order 3 in the $D_4$ case exists when $t_G = 0$ by triality \cite[3.6.3, 3.6.4]{Sp:ex} and an outer automorphism of order 2 in the $E_6$ case when $t_G = 0$ is provided by the ``standard automorphism'' of a $J$-structure \cite[p.~150]{Sp:jord}.

\item \label{flayed.SU} \emph{if $G$ is the special unitary group of a hermitian form relative to a separable quadratic extension $K/k$}, i.e., $G$ is of type $^2\!A_n$ and $\res_{K/k}(t_G)$ is zero in $H^2(K, Z)$.  We leave the details in this case as an exercise.

\item \label{flayed.R} \emph{if $k$ is real closed}. By the above cases, we may assume that $G$ has type $^2\!D_n$ (for $n \ge 4$) or $\dE$.  In the first case, $\aut(\D)(k) = \Zm2$ (also for $n = 4$) and $G$ is the spin group of a quadratic form by \cite[9.14]{KMRT}, so a hyperplane reflection gives the desired $k$-automorphism.  

In case $G$ has type $\dE$, combining pages 37, 38, 119, and 120 in \cite{Jac:ex} shows that the (outer) automorphism of the Lie algebra Jacobson denotes by $t$ is defined over $k$.

\end{enumerate}
I don't know any examples of absolutely almost simple $G$ where conditions \eqref{c1}--\eqref{c3} fail.  Furthermore, in all of the examples above, every $\pi$ from the right side of \eqref{autset} is not only of the form $\alpha(f)$ for some $f \in \aut(G)(k)$, but one can even pick $f$ to have the same order as $\pi$.  
\end{eg}

\section{Groups of type $D_\even$ over local fields} \label{local.sec}

The main point of this section is to prove the following lemma.

\begin{lem} \label{local}
Let $G$ be an adjoint semisimple group over a field $k$, and fix a maximal $k$-torus $T$ in $G$.  If $z_1, z_2$ are in the image of the map $H^1(k, T) \ra H^1(k, G)$ such that 
\begin{enumerate}
\item $G_{z_1}$ and $G_{z_2}$ are both quasi-split; or 
\item $T$ contains a maximal $k$-split torus in both $G_{z_1}$ and $G_{z_2}$ and 
  \begin{enumerate}
  \item $k$ is real closed, or
  \item $k$ is a (non-archimedean) local field and $G$ has type $D_{2n}$ for some $n \ge 2$,
  \end{enumerate}
\end{enumerate}
then $z_1 = z_2$.
\end{lem}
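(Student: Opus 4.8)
The plan is to separate the two hypotheses, since the first is essentially formal while the second carries all the field-specific content. For hypothesis (1): because $G$ is adjoint we have $\Gb = G$, and $G_{z_i}$ is quasi-split precisely when $z_i$ equals the distinguished class $\nu_G \in H^1(k,G)$, which is \emph{unique} with this property by \cite[31.6]{KMRT} (as recalled in \S\ref{folk.sec}). Hence $z_1 = \nu_G = z_2$ immediately, and in fact the hypothesis that the $z_i$ come from $H^1(k,T)$ is not needed here.

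For hypothesis (2) the first move is to reduce to the triviality of a single class. Replacing $G$ by $G_{z_1}$ and transporting $z_2$ along the twisting bijection $H^1(k,G) \iso H^1(k,G_{z_1})$---which carries the image of $H^1(k,T)$ into itself, because a $T$-valued cocycle acts on $T$ by $\operatorname{Int}(t)=\mathrm{id}$---we may assume $z_1$ is the base point and must show $z_2 = 1$. The crucial structural point is that if $\tilde z \in H^1(k,T)$ lifts $z_2$, then $T$ sits inside $G_{\tilde z}$ with exactly the same $k$-structure as in $G$, again because $\operatorname{Int}(t)$ is trivial on $T$. Therefore $G$ and $G_{z_2}$ share the maximal $k$-split subtorus $S \subseteq T$, the minimal $k$-Levi $M := Z_G(S) = Z_{G_{z_2}}(S)$, and all of the root datum of $M$; the central torus of $M$ is then common to both, so the entire discrepancy is concentrated in the semisimple anisotropic kernel $M_{\mathrm{der}}$. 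Hypothesis (2) says precisely that $M_{\mathrm{der}}$ is anisotropic in both $G$ and $G_{z_2}$.

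It now suffices to prove two things: that the only inner twist of $M_{\mathrm{der}}$ that stays anisotropic is $M_{\mathrm{der}}$ itself, and that the corresponding fiber of $H^1(k,T)\to H^1(k,G)$ collapses to the base point. Here the sub-cases diverge. Over a real closed field (2a), the Artin--Schreier identification $\ksep = k(\sqrt{-1})$, $\Gal(k)=\Zm2$, reduces us to the situation over $\R$, where an anisotropic semisimple group is a compact form and such a form is the unique anisotropic member of its inner class; so the twist is determined and $z_2=1$. Over a non-archimedean local field with $G$ of type $D_{2n}$, $n\ge 2$ (2b), one uses that there is no anisotropic \emph{simple} group of type $D$ over such a field (quadratic forms of dimension $\ge 5$, and skew-hermitian forms of rank $\ge 3$ over the quaternion division algebra, are isotropic), so $M_{\mathrm{der}}$ is an almost-direct product of groups of type $A$---norm-one groups of division algebras and special unitary groups---each of which is likewise the unique anisotropic form in its inner class.

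The main obstacle is this last, local, computation. One must (i) pin down which inner twists of $M_{\mathrm{der}}$ remain anisotropic, translating the condition through Tate--Nakayama for $H^1(k,T)$ and the Kottwitz description of $H^1(k,G)$ in terms of $X_*(T)$ and $\pi_1(G)$ (see \cite{PlatRap}); and (ii) verify that the fiber of $H^1(k,T)\to H^1(k,G)$ over $z_2$, which a priori carries an action of the rational Weyl group $N_G(T)(k)/T(k)$, reduces to a single point once anisotropy of $M_{\mathrm{der}}$ is imposed. It is in this step that the \emph{evenness} of the type is genuinely used---forcing the center $Z$ of the simply connected cover to be $\mu_2 \times \mu_2$ rather than $\mu_4$ (cf.\ \S\ref{folk.sec}), which is exactly what makes $D_{2n+1}$ a true exception in Theorem~\ref{tori}---and where the local duality between $H^1(k,T)$ and $H^2(k,Z)$ enters, so that the argument uses that $k$ is local and not merely of cohomological dimension~$2$.
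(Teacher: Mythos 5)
Your case (1) matches the paper, and the twisting reduction at the start of case (2) is legitimate. But the core of your case (2b) rests on a false assertion: that each type-$A$ factor of the semisimple anisotropic kernel is ``the unique anisotropic form in its inner class.'' Over a non-archimedean local field this holds only for type $A_1$ (unique quaternion division algebra); for $A_m$ with $m \ge 2$ there are several division algebras of degree $m+1$, distinguished by their invariants, hence several anisotropic classes in $H^1(k,\PGL_{m+1})$ with distinct images under $\delta$. This is precisely why the lemma fails for $D_{\mathrm{odd}}$: there the anisotropic kernel acquires an $A_3$ factor, i.e.\ a degree-$4$ division algebra, and one gets $\delta(z_1) = -\delta(z_2) \ne \delta(z_2)$ in $H^2(k,\mu_4)$ --- the counterexample recorded after the proof in the paper. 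The step you must actually supply --- and the one the paper's proof carries out via the Tits algebras $\omega_{2n-1}, \omega_{2n}$, the exceptional isomorphism $D_2 = A_1 \times A_1$, and Proposition \ref{tits.prop} --- is that for type $D_{2n}$ the anisotropic kernel is a product of $A_1$'s \emph{only}, governed by quaternion algebras, of which there is a unique division one over each finite extension of $k$. You instead locate the use of evenness in the structure of $Z$ and in an unexecuted Tate--Nakayama/Kottwitz computation; as written, the paragraph asserting uniqueness of anisotropic forms would equally ``prove'' the false statement for $D_{2n+1}$, which is a sure sign of a gap.

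The second gap is the ``fiber collapse'' step, which you name but do not perform, and which is where the field-specific input really enters. Knowing that $G_{z_2}$ is isomorphic to the base group (or that the anisotropic kernels are abstractly isomorphic) gives only $z_2 \in \ker\left[ H^1(k,G) \ra H^1(k,\aut(G)) \right]$, not $z_2 = 1$. Over a local field the paper short-circuits this entirely: $\delta$ is injective by Kneser--Bruhat--Tits, so it suffices to match Tits classes, and no analysis of the fibers of $H^1(k,T) \ra H^1(k,G)$ is needed. Over a real closed field the fibers of $\delta$ are genuinely nontrivial, and the paper must invoke Theorem \ref{flayed.prop}\eqref{c3} together with the existence of outer automorphisms (hyperplane reflections, Example \ref{flayed.eg}\eqref{flayed.R}) to pass from $\delta(z_1) = \delta(z_2)$ and $G_{z_1} \cong G_{z_2}$ to $z_1 = z_2$. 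Your real-closed argument --- ``a compact form is the unique anisotropic member of its inner class, so the twist is determined'' --- conflates isomorphism classes of groups with classes in $H^1(k,\Gb)$ and skips exactly this point.
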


\begin{proof}
For short, we write $G_i$ for $G_{z_i}$.  In case (1), the uniqueness of the class $\nu_G \in H^1(k, G)$ such that $G_{\nu_G}$ is quasi-split (already used in \S\ref{folk.sec}) gives that $z_1 = \nu_G = z_2$.  So suppose (2) holds.
As $T$ is contained in both these groups, their Tits indexes are naturally identified over $k$.  In particular, if one is quasi-split then so is the other, and we are done as in (1).  So we assume that neither group is quasi-split.

In case (2a), where $k$ is real closed, one immediately reduces to the case where $G$ is absolutely simple.  That case is trivial because the isomorphism class of an adjoint simple group is determined by its Tits index, so $G_1$ is isomorphic to $G_2$.  The Tits index also determines the Tits algebras---see pages 211 and 212 of \cite{Ti:R} for a recipe---so by Prop.~\ref{tits.prop}, $\delta(z_1) = \delta(z_2)$.  The claim now follows from Example \ref{flayed.eg}\eqref{flayed.R} and Theorem \ref{flayed.prop}\eqref{c3}.

So assume for the remainder of the proof that (2b) holds.  In particular, $\delta$ is injective.  
Number the simple roots of $G_1$ with respect to $T$ as in \cite{Bou:g4}.  If $G_1$ has type $\iiD$, we take $\alpha_1$ to be the root at the end of the Galois-fixed arm of the Tits index.  Otherwise, we assign the numbering arbitrarily in case there is ambiguity (e.g., $\alpha_{2n-1}$ and $\alpha_{2n}$).
Note that $G_1$ cannot have type $\iiiD$ or $\viD$, because it is not quasi-split.

As $2\omega_i$ is in the root lattice for every $i$, the Tits algebras $\omega_i(t_{G_1})$ for $i = 2n-1, 2n$ define up to $k$-isomorphism a quaternion (Azumaya) algebra $D$ over a quadratic \'etale $k$-algebra $\ell$.  By the exceptional isomorphism $D_2 = A_1 \times A_1$ and a Tits algebra computation, $\PGL_1(D)$ is isomorphic to $\PSO(M_2(H), \s, f)$ for $H$ the quaternion algebra underlying $\omega_1(t_{G_1})$ and some quadratic pair $(\s, f)$ such that the even Clifford algebra $C_0(\s, f)$ is isomorphic to $D$, cf.~\cite[15.9]{KMRT}.  Appending $2n-2$ hyperbolic planes to $(\s, f)$, we obtain a quadratic pair $(\s_0, f_0)$ such that $C_0(\s_0, f_0)$ is Brauer-equivalent to $D$.  As $\PSO(M_{2n}(H), \s_0, f_0)$ has the same Tits algebras as $G_1$ (up to renumbering the simple roots of $G_1$), Prop.~\ref{tits.prop} and injectivity of $\delta$ implies that the two groups are isomorphic.  (We have just given a characteristic-free proof of Tsukamoto's theorem \cite[10.3.6]{Sch}, relying on the Bruhat-Tits result that $\delta$ is injective.)

Now both $G_1$ and $G_2$ have the same Tits index and semisimple anisotropic kernels of Killing-Cartan type a product of $A_1$'s.  As there is a unique quaternion division algebra over each finite extension of $k$, it follows that $G_1$ and $G_2$ have the same Tits class, i.e., $\delta(z_1) = \delta(z_2)$.
\end{proof}

In the statement of (2b), we cannot replace ``$D_{2n}$ for some $n \ge 2$" with ``$D_\ell$ for some $\ell$" because the claim fails for groups of type $D_{\text{odd}}$.  
This can been seen already for type $D_3 = A_3$: one can find $z_1, z_2 \in H^1(k, \PGL_4)$ so that $G_1$ and $G_2$ are both isomorphic to  $\aut(B)^\circ$ for a division algebra $B$ of degree 4, but $\delta(z_1) = -\delta(z_2)$ in $H^2(k, \mu_4) = \Zm4$.  Adding hyperbolic planes as in the proof of the lemma gives a counterexample for all odd $\ell$.   This counterexample is visible in the proof: for groups $G_1, G_2$ of type $D_\ell$ with $\ell$ odd and $\ge 3$, the semisimple anisotropic kernels have Killing-Cartan type a product of $A_1$'s and an $A_3$ and the very last sentence of the proof fails.

\section{Groups of type $D_\even$ over global fields} \label{global.sec}

The following technical theorem concerning groups over a global field connects our Theorem \ref{flayed.prop} (about groups over an arbitrary field) with the results in \cite{PrRap:weakly}.  It  implies Theorem 9.1 of \cite{PrRap:Deven}.

\begin{thm} \label{MT}
Let $G_1$ and $G_2$ be adjoint groups of type $D_{2n}$ for some $n \ge 2$ over a global field $K$, such that $G_1$ and $G_2$ have the same quasi-split inner form---i.e., the smallest Galois extension of $K$ over which $G_1$ is of inner type is the same as for $G_2$.  If there exists a maximal torus $T_i$ in $G_i$ for $i = 1$ and $2$ such that
\begin{enumerate}
\item there is a $K_\sep$-isomorphism $\phi \!: G_1 \ra G_2$ whose restriction to $T_1$ is a $K$-isomorphism $T_1 \ra T_2$; and 
\item there is a finite set $V$ of places of $K$ such that:
\begin{enumerate}
\item For all $v \not\in V$, $G_1$ and $G_2$ are quasi-split over $K_v$.
\item For all $v \in V$, $(T_i)_{K_v}$ contains a maximal $K_v$-split torus of $(G_i)_{K_v}$;
\end{enumerate}
\end{enumerate}
then $G_1$ and $G_2$ are isomorphic over $K$.
\end{thm}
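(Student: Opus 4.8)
The plan is to recast the geometric hypotheses cohomologically, prove that the twisting class is locally trivial at every place using Lemma~\ref{local}, and then descend to a genuine $K$-isomorphism by controlling the resulting Tate--Shafarevich obstruction through the Tits algebras. To begin, I would fix $G_1$ together with $T_1$ and form the cocycle $c_\sigma := \phi^{-1}\,{}^\sigma\!\phi \in \aut(G_1)(\ksep)$, whose class twists $G_1$ into $G_2$. Since $\phi$ restricts to a $K$-isomorphism $T_1 \to T_2$, the restriction $\phi|_{T_1}$ is Galois-equivariant, so $({}^\sigma\!\phi)|_{T_1} = \phi|_{T_1}$ and hence each $c_\sigma$ fixes $T_1$ pointwise. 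An automorphism of the adjoint group $G_1$ fixing a maximal torus pointwise is conjugation by an element of that torus, so $c_\sigma$ is conjugation by some $t_\sigma \in T_1$. Consequently the class $z \in H^1(K,G_1)$ with $G_2 = (G_1)_z$ lies in the image of $H^1(K,T_1) \to H^1(K,G_1)$, the torus $T := T_1 = T_2$ is a maximal $K$-torus of every twist, and (using that $G_1,G_2$ share a quasi-split inner form) the center $Z$ and the map $\delta \colon H^1(K,G_1) \to H^2(K,Z)$ are common to both. Proving the theorem now means showing that $z$ becomes trivial in $H^1(K,\aut(G_1))$.

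Next I would run Lemma~\ref{local} at each place $v$, applied to the adjoint group $(G_1)_{K_v}$, the torus $T_{K_v}$, and the pair of classes $0$ and $z_v$, whose twists are $(G_1)_{K_v}$ and $(G_2)_{K_v}$. For $v \notin V$ both twists are quasi-split by hypothesis (2a), so case~(1) of the lemma gives $z_v = 0$. For $v \in V$ hypothesis (2b) says $T$ contains a maximal $K_v$-split torus of both twists, so case~(2) applies: complex places are trivial, real places use (2a), and the non-archimedean places use (2b)(ii)---precisely the type-$D_{2n}$ local input. In every case $z_v = 0$, so $z$ is locally trivial at all places of $K$.

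The global descent is the crux, and it is exactly where the failure of the naive Hasse principle for adjoint groups of type $D_{2n}$ must be confronted. I would first kill the obstruction $\delta(z)$: it lies in $\ker[H^2(K,Z) \to \prod_v H^2(K_v,Z)]$, and by Proposition~\ref{tits.prop} it is detected by its minuscule Tits algebras, which for type $D_{2n}$ are the three Brauer classes $\omega_1(\delta(z))$, $\omega_{2n-1}(\delta(z))$, $\omega_{2n}(\delta(z))$ over the appropriate finite extensions of $K$. Each is locally trivial everywhere, so the Albert--Brauer--Hasse--Noether theorem forces it to vanish; hence $\delta(z) = 0$ and $t_{G_1} = t_{G_2}$. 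With $\delta(z)=0$ the class $z$ lifts to $H^1(K,\Gt_1)$, and I would combine the Hasse principle for the simply connected cover (Kneser--Harder--Chernousov, as in Example~\ref{cd2.eg}) with the local triviality of $z$ to descend to a $K$-form, using the outer-automorphism criterion of Theorem~\ref{flayed.prop} and the construction of diagram automorphisms in Example~\ref{flayed.eg}(\ref{flayed.inner}) to absorb a possible correction by an element of $\aut(\D)(K)$ fixing the now-common Tits class. I expect this final descent to be the main obstacle: the vanishing of $\delta(z)$ does not by itself yield $z=0$, and pinning down the remaining ambiguity requires the arithmetic of the center $Z$ (via Poitou--Tate duality) together with the simply connected Hasse principle and the outer-automorphism input, rather than any single off-the-shelf local--global principle.
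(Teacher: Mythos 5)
Your first two steps coincide in substance with the paper's argument. Like the paper, you use the hypothesis that $\phi\vert_{T_1}$ is defined over $K$ to see that the twisting cocycle fixes a maximal torus pointwise and hence takes values in that torus, so that $G_2$ is obtained from $G_1$ by a class $z$ in the image of $H^1(K,T_1)\ra H^1(K,G_1)$; and your place-by-place application of Lemma \ref{local} to the pair of classes $0$ and $z_v$ is exactly how the paper compares the two groups locally. (The paper bases everything at the common quasi-split inner form $G$ and compares two torus-valued classes $z_1,z_2\in H^1(K,G)$ rather than one class against $0$ in $H^1(K,G_1)$, but that difference is cosmetic.)

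The final step, however, contains a genuine gap, and it is the only place where anything remained to be done. Once you know that $z\in H^1(K,G_1)$ is locally trivial at every place, you are finished: $G_1$ is a \emph{connected} adjoint semisimple group, and the Kneser--Harder Hasse principle \cite[p.~336, Th.~6.22]{PlatRap} states that $H^1(K,G_1)\ra\prod_v H^1(K_v,G_1)$ is injective, so $z=0$ and $G_1\cong G_2$ over $K$. Your stated worry about ``the failure of the naive Hasse principle for adjoint groups of type $D_{2n}$'' conflates the Hasse principle for $H^1(K,\aut(G_1))$ --- which can indeed fail for $D_{2n}$ and is why the theorem is delicate --- with the Hasse principle for $H^1$ of the connected group $G_1$, which holds; the whole point of your first step was to replace a class of $\aut(G_1)$ by an inner, torus-valued one precisely so that the connected-group principle applies. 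The substitute you sketch is not carried out and is problematic as stated: after lifting $z$ to $H^1(K,\Gt_1)$ you cannot conclude the lift is locally trivial at real places (it may differ from $0$ there by the image of a class from $H^1(K_v,Z)$, so one must first adjust the lift by a global class of $Z$ with prescribed real restrictions --- in effect reproving the adjoint Hasse principle), and the appeal to Theorem \ref{flayed.prop} to ``absorb a correction by an element of $\aut(\D)(K)$'' is a red herring, since $z$ is already inner and no outer correction arises. Replace the last paragraph by a single citation of the adjoint Hasse principle.
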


The hypotheses are what one obtains by assuming the existence of weakly commensurable arithmetic subgroups, see for example Theorems 1 and 6 and Remark 4.4---and especially p.156---in \cite{PrRap:weakly}.  Note that the groups appearing in the theorem can be trialitarian, i.e., of type $\iiiD$ or $\viD$.  We remark that Bruce Allison gave an isomorphism criterion with very different hypotheses in \cite[Th.~7.7]{A:num}.

\begin{proof}
Write $G$ for the unique adjoint quasi-split group that is an inner form of $G_1$ and $G_2$.  By Steinberg \cite[pp.~338, 339]{PlatRap}, there is a $K_\sep$-isomorphism $\psi_2 \!: G_2 \ra G$ whose restriction to $T_2$ is defined over $K$.  We put 
$\psi_1 := \psi_2 \phi$ and
$T := \psi_2(T_2) = \psi_1(T_1)$.
Then $G_i$ is isomorphic to $G$ twisted by the 1-cocycle $\s \mapsto \psi_i (^\s \psi_i)^{-1}$.  But this 1-cocycle consists of elements of $\aut(G)$ that fix $T$ elementwise, hence belong to $T$ itself.  That is, for $i = 1, 2$, there is a cocycle $z_i$ in the image of $H^1(K, T) \ra H^1(K, G)$ such that $G_i$ is isomorphic to $G$ twisted by $z_i$.\footnote{This argument does not use the fact that $K$ is a number field nor that $G_1$ and $G_2$ have type $D_{2n}$, so roughly speaking it applies generally to the situation where $G_1$ and $G_2$ share a maximal torus over the base field---more precisely, to the situation arising in Remark 4.4 of \cite{PrRap:weakly}.}

Now Lemma \ref{local} gives that $\res_{K_v/K}(z_1) = \res_{K_v/K}(z_2)$ for every $v$, hence $z_1 = z_2$ by the Kneser-Harder Hasse Principle \cite[p.~336, Th.~6.22]{PlatRap} and $G_1$ is isomorphic to $G_2$ over $K$.
\end{proof}

\medskip

\noindent{\small{\textbf{Acknowledgments}  My research was partially supported by NSF grant DMS-0653502.  I thank the University of Virginia for its hospitality during a visit there where this work began, and Andrei Rapinchuk, Gopal Prasad, and Emily Hamilton for their comments and suggestions.}}


\providecommand{\bysame}{\leavevmode\hbox to3em{\hrulefill}\thinspace}
\providecommand{\MR}{\relax\ifhmode\unskip\space\fi MR }
\providecommand{\MRhref}[2]{%
  \href{http://www.ams.org/mathscinet-getitem?mr=#1}{#2}
}
\providecommand{\href}[2]{#2}

\end{document}